\newcommand{\N}{\mathbb{N}}                                              
\newcommand{\Z}{\mathbb{Z}}                                                                                         
\newcommand{\Q}{\mathcal{Q}}                                              
\newcommand{\R}{\mathbb{R}}                                              
\newcommand{\T}{\mathbb{T}}
\newcommand{\p}{\mathbf{p}}
\newcommand{\cL}{\mathcal{L}}
\newcommand{\cH}{\mathcal{H}}
\newcommand{\cQ}{\mathcal{Q}}
\newcommand{\cX}{\mathcal{X}}
\DeclareMathOperator{\Ent}{Ent}
\newtheorem{thm}{\textsc{Theorem}}[section]
\newtheorem{lem}[thm]{\textsc{Lemma}}
\newtheorem{prop}[thm]{\textsc{Proposition}}
\newtheorem{defn}{\textsc{Definition}}[section]
\newtheorem{rem}{\textsc{Remark}}[section]
\newtheorem{exple}{\textsc{Example}}[section]
\begin{document}

\title*{The gradient flow approach to hydrodynamic limits for the simple exclusion process}

\author{Max Fathi and Marielle Simon}
\institute{Marielle Simon \at Departamento de Matem\'atica, PUC, Rua Marqu\^es de S\~ao Vicente 225, 22453-900, Rio de Janeiro \at\email{marielle.simon@mat.puc-rio.br} \newline Max Fathi \at Department of Mathematics, University of California, Berkeley \at\email{mfathi@phare.normalesup.org}}
%
%
\maketitle

\abstract{We present a new approach to prove the macroscopic hydrodynamic behaviour for interacting particle systems, and as an example we treat the well-known case of the symmetric simple exclusion process (SSEP). More precisely, we characterize any possible limit of its empirical density measures as solutions to the heat equation by passing to the limit in the gradient flow structure of the particle system.
 \keywords{Hydrodynamic limits, gradient flows structure, diffusive heat equation, reversible Markov chains.}}

\section{Introduction}
\label{sec:1}
The aim of this work is to show how one can use gradient flow structures to prove convergence to the hydrodynamic limit for interacting particle systems. The exposition is focused on the case of the symmetric simple exclusion process on the discrete $d$-dimensional torus, but the strategy can be adapted to other reversible particle systems, such as zero-range processes (see \cite{MR1707314} for the definitions of these models).  Gradient flows are ordinary differential equations of the form
$$\dot{x}(t) = -\nabla V(x(t)),$$ where $\nabla V$ denotes the gradient of the function $V$. 
De Giorgi and his collaborators showed in \cite{DGMT} how to give a meaning to solutions to such equations in the setting of metric spaces: these solutions are called minimizing-movement solutions, or curves of maximal slope. When considering the case of spaces of probability measures, one can use this notion to rewrite the partial differential equations governing the time evolution of the laws of diffusion processes, such as the heat equation, as gradient flows, for which $V$ is the entropy with respect to the optimal transport (or Wasserstein) distance. We refer to \cite{AGS} for more details. This framework can be adapted to the case of reversible Markov chains on finite spaces. This was proven independently by Maas \cite{M} and Mielke \cite{Mie}, who both developed a discrete counterpart to the Lott-Sturm-Villani theory of lower bounds on Ricci curvature for metric spaces. 

These gradient flow structures are a powerful tool to study convergence of sequences of dynamics to some limit. Two main strategies have already been developed. One of them consists in using the discrete (in time) approximation schemes suggested by the gradient flow structure (see for example \cite{ASZ}). The second one, which we shall use here, consists in characterizing gradient flows in terms of a relation between the energy function and its variations, and passing to the limit in this characterization. It was first developed by Sandier and Serfaty in \cite{SS}, and then generalized in \cite{Ser}. This strategy can be combined with the gradient flow structure of \cite{M,Mie} to prove convergence to some scaling limit for interacting particle systems. This was recently done for chemical reaction equations in \cite{MM} and mean-field interacting particle systems on graphs in \cite{EFLS}. 

Gradient flow structures are also related to large deviations, at least when considering diffusion processes, see  \cite{ADPZ2011,F}. While we only present here the case of the SSEP, the technique is fairly general, and can be adapted to other reversible interacting particle systems. For example, the adaptation of the proof to the case of a zero-range process on the lattice (with nice rates) is quite straightforward. It would be very interesting to apply this method to obtain other PDEs, more degenerate  than the heat equation, as hydrodynamic limits of some interacting particle system: for instance, porous medium  and fast diffusion equations (see \cite{otto, vazquez}) also have a gradient flow structure, and are not directly solvable by standard techniques.

The plan of the sequel is as follows: in Section \ref{sec:framework}, we present the gradient flow framework for Markov chains on discrete spaces developed in \cite{M,Mie}. In Section \ref{sec:scal}, we expose the setup for proving convergence of gradient flows. Finally, in Section \ref{sec:ssep}, we investigate the symmetric simple exclusion process, and reprove the convergence to its hydrodynamic limit.

\section{Gradient flow structure for reversible Markov chains}
\label{sec:framework}

\subsection{Framework}

We start by describing the gradient flow framework for Markov chains on discrete spaces. The presentation we use here is the one of \cite{M}. We consider an irreducible continuous time reversible Markov chain on a finite space $\mathcal{X}$ with kernel $K : \mathcal{X}\times\mathcal{X} \to \R_+$ and invariant probability measure $\nu$. Let $\mathcal{P}(\mathcal{X})$ (resp. $\mathcal{P}_+(\mathcal{X})$)
be the set of probability  densities  (resp. positive) with respect to $\nu$. The probability law $\rho_t\nu$ of the Markov chain at time $t$ satisfies the evolution equation 
\begin{equation} 
\dot{\rho}_t(x)+\sum_{y \in \mathcal{X}} (\rho_t(x)-\rho_t(y))K(x,y)=0,\quad \text{ for all } x \in \mathcal{X}.
\label{eq:evolution}
\end{equation}
Hereafter we denote by $\dot{\rho}_t(x)$ the derivative with respect to time of the function $(t,x)\mapsto \rho_t(x)$. Given a function $\psi : \mathcal{X} \to \R$, we  define $\nabla \psi(x,y): = \psi(y) - \psi(x)$.  The discrete divergence of a function $\Phi : \cX \times \cX \longrightarrow \R$ is defined as
$$\operatorname{div}(\Phi)(x) := \frac{1}{2}\sum_{y\in\cX} (\Phi(x,y) - \Phi(y,x))K(x,y).$$
With these definitions, we have the integration by parts formula
\begin{equation}\sum_{x,y\in\cX} \nabla \psi(x,y) \Phi(x,y)K(x,y)\nu(x) = -\sum_{x\in\cX} \psi(x)\operatorname{div}(\Phi)(x)\nu(x).\label{eq:ipp}\end{equation}
Let us introduce three notions we shall use to define the gradient flow structure: 
\begin{defn}
\begin{enumerate}
\item The \emph{relative entropy with respect to} $\nu$ is defined as
\begin{equation*}
{\Ent}_\nu(\rho) := \sum_{x \in \mathcal{X}} \nu(x)\rho(x)\log \rho(x), \quad \text{ for } \rho \in \mathcal{P}(\mathcal{X}),
\end{equation*} with the convention that $\rho(x)\log \rho(x)=0$ if $\rho(x)=0$. We  sometimes denote $\mathcal{H}(\rho):={\Ent}_\nu(\rho)$, whenever  $\nu$ is fixed and no confusion arises.

\item The \emph{symmetric Dirichlet form} is given for two real-valued functions $\phi,\psi$  by
\begin{equation*}
\mathcal{E}(\phi, \psi) := \frac{1}{2}\sum_{x,y \in \mathcal{X}} (\phi(y) - \phi(x))(\psi(y) - \psi(x))K(x,y)\nu(x),
\end{equation*} 

\item The \emph{Fisher information} (or entropy production) writes as 
$\mathcal{I}(\rho) := \mathcal{E}(\rho, \log \rho )$.
\end{enumerate}
\end{defn}

Notice that $\Ent_{\nu}$ is the \emph{mathematical} entropy, and not the physical entropy. It decreases along solutions of \eqref{eq:evolution}, so in physical terms it plays the role of a free energy. We call $\mathcal{I}$ the entropy production since along  solutions of \eqref{eq:evolution} we have \[\frac{d}{dt}\Ent_{\nu}(\rho_t) = -\mathcal{I}(\rho_t).\] 

\subsection{Continuity equation}
We introduce the \emph{logarithmic mean} $\Lambda(a,b)$ of two non-negative numbers $a,b$ as 
\begin{equation*}
\Lambda(a,b)= \int_0^1 a^s b^{1-s} ds =   \frac{b-a}{\log(b)-\log(a)} \quad \text{ if } a \neq b, a>0, b>0,
\end{equation*}
and $\Lambda(a,a)=a$, and also $\Lambda(a,b)=0$ if $a=0$ or $b=0$. 
The mean $\Lambda$ satisfies: 
\begin{equation}
\sqrt{ab} \le \Lambda(a,b) \le (a+b)/{2}, \qquad \text{ for all } a \ge 0, b \ge 0. \label{eq:logarith_ineq}
\end{equation}
Let us now define, for $\rho \in \mathcal{P}_+(\mathcal{X})$, its \emph{logarithmic mean} $\hat\rho$ defined on $\mathcal{X}\times\mathcal{X}$ as
\begin{equation}
\hat{\rho}(x,y) := \Lambda(\rho(x),\rho(y)).
\end{equation}
In order to define a suitable metric on $\mathcal{P}(\mathcal{X})$, we need a representation of curves as solving a continuity equation: 
\begin{lem}
Given a smooth flow of  positive probability densities $\{\rho_t\}_{t \geq 0}$ on $\cX$, there exists a function $(t,x)\mapsto \psi_t(x)$ such that the following continuity equation holds for any $t \ge 0$ and $x \in \cX$:
\begin{equation} \label{continuity_equation}
\dot\rho_t(x) + \sum_{y \in \mathcal{X}}  (\psi_t(y) - \psi_t(x))K(x,y)\hat{\rho}_t(x,y) = 0.
\end{equation}
Moreover, for any $t\ge 0$, $\psi_t(\cdot)$ is unique up to an additive constant.
\end{lem}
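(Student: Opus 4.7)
My plan is to recast the continuity equation \eqref{continuity_equation} as a linear inverse problem for $\psi_t$ at each fixed time, solvable via a weighted discrete Laplacian associated with $\rho_t$. Define the linear operator $A_\rho : \mathbb{R}^\mathcal{X} \to \mathbb{R}^\mathcal{X}$ by
$$ (A_\rho \psi)(x) := \sum_{y \in \mathcal{X}} (\psi(x) - \psi(y)) K(x,y)\hat\rho(x,y), $$
so that \eqref{continuity_equation} reads $\dot\rho_t(x) = (A_{\rho_t}\psi_t)(x)$. It thus suffices to show that for each $t$ the operator $A_{\rho_t}$ is a bijection between the quotient $\mathbb{R}^\mathcal{X}/\mathbb{R}$ and the hyperplane $H := \{f : \sum_x f(x)\nu(x) = 0\}$, since $\dot\rho_t \in H$ whenever $\rho_t$ is a probability density.

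The crux is an integration-by-parts identity. Using reversibility $K(x,y)\nu(x) = K(y,x)\nu(y)$ together with the symmetry $\hat\rho(x,y)=\hat\rho(y,x)$ of the logarithmic mean, a symmetrization in $(x,y)$ yields
$$ \langle A_\rho \psi, \phi\rangle_\nu = \tfrac{1}{2}\sum_{x,y\in\mathcal{X}}(\psi(y)-\psi(x))(\phi(y)-\phi(x))K(x,y)\hat\rho(x,y)\nu(x), $$
where $\langle f, g\rangle_\nu := \sum_x f(x)g(x)\nu(x)$. This identifies $A_\rho$ as self-adjoint and non-negative on $(\mathbb{R}^\mathcal{X}, \langle\cdot,\cdot\rangle_\nu)$. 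Taking $\phi=\psi$, the vanishing of the associated quadratic form forces $\psi(y) = \psi(x)$ for every edge with $K(x,y)\hat\rho(x,y) > 0$; since $\rho$ is strictly positive and $K$ is irreducible, the underlying graph is connected and $\psi$ must be constant, so $\ker A_\rho = \mathbb{R}\cdot\mathbf{1}$.

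Taking $\phi \equiv 1$ in the same identity shows $\mathrm{Im}(A_\rho) \subset H$, and by rank--nullity both spaces have dimension $|\mathcal{X}|-1$, so $A_\rho$ induces a bijection $\mathbb{R}^\mathcal{X}/\mathbb{R} \to H$. Applying this at each time $t$ to the element $\dot\rho_t \in H$ yields existence and uniqueness of $\psi_t$ up to additive constants, as claimed. The only genuine obstacle is the symmetrization identity, which hinges on reversibility of $K$ and the symmetry of $\hat\rho$; once it is in place, the remainder is finite-dimensional linear algebra.
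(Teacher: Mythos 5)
Your argument is correct and is essentially the proof the paper points to (it defers to \cite[Section 3]{M} rather than giving one): recast the continuity equation as $A_{\rho_t}\psi_t=\dot\rho_t$ for the weighted discrete Laplacian, use reversibility of $K$ and symmetry of the logarithmic mean to show $A_{\rho_t}$ is self-adjoint and non-negative in $\langle\cdot,\cdot\rangle_\nu$, identify its kernel with the constants via irreducibility and strict positivity of $\rho_t$ (and of $\nu$), and conclude by rank--nullity since $\dot\rho_t$ has $\nu$-mean zero. The finite-dimensional linear algebra is all in order, so there is nothing to reconcile with the cited source.
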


We refer to \cite[Section 3]{M} for the proof.


\begin{defn} Given $\rho \in \mathcal{P}(\mathcal{X})$ and $\psi:\cX\to\R$, we define \emph{the action}
\[\mathcal{A}(\rho, \psi) := \frac12 \sum_{x,y \in \mathcal{X}}  (\psi(y) - \psi(x))^2\; \hat{\rho}(x,y)K(x,y)\nu(x) \geq 0.\]
\end{defn}
A distance between two probability densities $(\rho_0,\rho_1)$ could then be defined as the infimum of the action of all curves $\{\rho_t,\psi_t\}_{t\in[0,1]}$ linking these densities, as  was done in \cite{M}. However, we do not need to introduce that metric here, since we shall only use the formulation of gradient flows as minimizing-movement curves, as follows: 

\begin{prop} \label{prop_formulation_grad_flow}
Let $\{\rho_t\}_{t \ge 0}$ be a smooth flow of probability densities on $\mathcal{X}$, and let $\{\psi_t\}_{t\ge 0}$ be such that the continuity equation \eqref{continuity_equation} holds. Then, for any $T>0$, 
\begin{equation}
\Ent_\nu(\rho_T) - \Ent_\nu(\rho_0) + \frac{1}{2}\int_0^T{\mathcal{I}(\rho_t)dt} + \frac{1}{2}\int_0^T{\mathcal{A}(\rho_t, \psi_t)dt} \geq 0,\label{eq:gradient}
\end{equation}
with equality if and only if $\{\rho_t\}_{t \ge 0}$ is the flow of the Markov process on $\mathcal{X}$ with kernel $K$ and invariant measure $\nu$, solution to \eqref{eq:evolution}.
\end{prop}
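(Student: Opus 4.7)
The strategy is the discrete version of the De Giorgi energy--dissipation characterization of gradient flows: differentiate the entropy along the continuity equation, bound the resulting cross-term by a pointwise Young inequality, and recognize the Markov evolution \eqref{eq:evolution} as the equality case.

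I begin by computing $\frac{d}{dt}\Ent_\nu(\rho_t)$. Mass conservation ($\sum_x \dot\rho_t(x)\nu(x) = 0$) reduces this derivative to $\sum_x \log\rho_t(x)\,\dot\rho_t(x)\,\nu(x)$. Substituting $\dot\rho_t$ from \eqref{continuity_equation} and symmetrizing the resulting double sum by swapping $x\leftrightarrow y$ --- the weight $\hat\rho_t(x,y)K(x,y)\nu(x)$ being invariant under this exchange, thanks to the symmetry of $\hat\rho_t$ and the reversibility identity $K(x,y)\nu(x) = K(y,x)\nu(y)$ --- I would arrive at the bilinear formula
\begin{equation*}
\frac{d}{dt}\Ent_\nu(\rho_t) \;=\; \frac{1}{2}\sum_{x,y\in\cX} \nabla\log\rho_t(x,y)\,\nabla\psi_t(x,y)\,\hat\rho_t(x,y)\,K(x,y)\,\nu(x).
\end{equation*}
The pointwise Young inequality $-ab \leq \tfrac{1}{2}(a^2+b^2)$ applied summand-wise, with $a$ and $b$ absorbing $\sqrt{\hat\rho_t(x,y) K(x,y)\nu(x)}$, then gives $-\frac{d}{dt}\Ent_\nu(\rho_t) \leq \tfrac{1}{2}\mathcal{I}(\rho_t) + \tfrac{1}{2}\mathcal{A}(\rho_t,\psi_t)$, and integrating over $[0,T]$ yields \eqref{eq:gradient}.

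For the equality case, Young saturates at every edge if and only if $\nabla\psi_t(x,y) = -\nabla\log\rho_t(x,y)$ whenever $\hat\rho_t(x,y)K(x,y) > 0$; plugging this relation into \eqref{continuity_equation} and invoking the defining identity $\hat\rho_t(x,y)\nabla\log\rho_t(x,y) = \rho_t(y)-\rho_t(x)$ of the logarithmic mean recovers exactly \eqref{eq:evolution}. The main care required is the bookkeeping of the factor $\tfrac{1}{2}$, which must emerge cleanly from the symmetrization step and then line up with the factor $\tfrac{1}{2}$ inside the definitions of $\mathcal{I}$ and $\mathcal{A}$; a secondary subtlety is that edges with $\hat\rho_t(x,y)K(x,y) = 0$ contribute nothing to either the continuity equation or \eqref{eq:evolution}, so the pointwise matching of $\psi_t$ with $-\log\rho_t$ is only required modulo an additive constant, in perfect compatibility with the uniqueness statement of the preceding lemma.
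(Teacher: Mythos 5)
Your proposal is correct and follows essentially the same route as the paper: differentiate the entropy, substitute the continuity equation, symmetrize using reversibility to obtain the bilinear term $\frac{1}{2}\sum_{x,y}\nabla\log\rho_t\,\nabla\psi_t\,\hat\rho_t K\nu$, apply Young's inequality, and identify the equality case via the logarithmic-mean identity. Your remark that the pointwise matching of $\nabla\psi_t$ with $-\nabla\log\rho_t$ is only forced on edges with $\hat\rho_t(x,y)K(x,y)>0$ is a small but accurate refinement of the paper's equality discussion.
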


This is the analogue of the characterization of solutions to $\dot{x_t} = -\nabla V(x_t)$ on $\R^d$ as the only curves for which the non-negative functional \[V(x_T) - V(x_0) + \frac{1}{2}\int_0^T{|\nabla V(x_t)|^2dt} + \frac{1}{2}\int_0^T{|\dot{x}_t|^2dt}\] cancels. Hence in the framework of Markov chains, $\Ent_{\nu}$ plays the role of $V$, and the entropy production $\mathcal{I}$ plays the role of $|\nabla V|^2$. 

\begin{proof}[of Proposition \ref{prop_formulation_grad_flow}]
Denote $\mathcal{H}(\rho)=\Ent_\nu(\rho)$. We have
\begin{align*}
\mathcal{H}(\rho_T) - \mathcal{H}(\rho_0) & = \int_0^T{\frac{d}{dt}\big(\mathcal{H}(\rho_t)\big)dt} = \int_0^T{\sum_{x \in \mathcal{X}} \nu(x)\frac{d}{dt}\big(\rho_t(x) \log \rho_t(x)\big) dt} \\
&= \int_0^T{\sum_{x \in \mathcal{X}} \nu(x)\dot{\rho}_t(x) \log \rho_t(x) dt}.
\end{align*}
Using the reversibility  of the invariant measure $\nu$, we write
\begin{align*}
\mathcal{H}(\rho_T) &- \mathcal{H}(\rho_0) = -\int_0^T\sum_{x,y \in \mathcal{X}} \nu(x)(\psi_t(y) - \psi_t(x))K(x,y)\hat{\rho}_t(x,y) \log \rho_t(x) \; dt  \\
&= \int_0^T{\frac{1}{2}\sum_{x,y \in \mathcal{X}} (\psi_t(y) - \psi_t(x))(\log \rho_t(y) - \log \rho_t(x))\hat{\rho}_t(x,y)K(x,y)\nu(x)\; dt} \\
&\geq -\frac{1}{4}\int_0^T{\sum_{x,y \in \mathcal{X}} (\psi_t(x) - \psi_t(y))^2\hat{\rho}_t(x,y)K(x,y)\nu(x)\; dt} \\
& \hspace{1cm} - \frac{1}{4}\int_0^T{\sum_{x,y \in \mathcal{X}} (\log \rho_t(x) - \log \rho_t(y))^2\hat{\rho}_t(x,y)K(x,y)\nu(x)\; dt}  \\
&= -\frac{1}{2}\int_0^T{\mathcal{A}(\rho_t, \psi_t)\; dt} -\frac{1}{2}\int_0^T{\mathcal{I}(\rho_t)\; dt}
\end{align*}
with equality if and only if, for all $x,y \in \mathcal{X}$ and almost every $t \in [0, T]$, we have 
$$\psi_t(x) - \psi_t(y) = \log \rho_t(y) - \log \rho_t(x)$$
which is equivalent to saying that for almost every $t$ and for every $x$ we have
$$\dot\rho_t(x) + \sum_{y \in \mathcal{X}} (\log \rho_t(x) - \log \rho_t(y))\hat{\rho}_t(x,y)K(x,y) = 0.$$
\end{proof}

\section{Scaling limits and gradient flows}
\label{sec:scal}

With the formulation of Proposition \ref{prop_formulation_grad_flow}, we can use the approach of Sandier and Serfaty  \cite{Ser} to study convergence of sequences of Markov chains to a scaling limit. Let $(K_n)$ be a sequence of reversible Markov kernels on finite spaces $\mathcal{X}_n$, and let $(\nu_n)$ be the sequence of invariant measures on $\mathcal{X}_n$. Since we wish to investigate the asymptotic behaviour of the sequence of random processes, it is much more convenient to work in a single space $\mathcal{X}$ that contains all the $\mathcal{X}_n$. Hence we shall assume that we are given a space $\mathcal{X}$ and a collection of embeddings $\p_n : \mathcal{X}_n \longrightarrow \mathcal{X}$. In practice, the choice of  $\mathcal{X}$ and $\p_n$ is suggested by the model under investigation. In the next section, which is focused on the simple exclusion process on the torus, the embeddings will map a configuration $\eta$ onto the associated empirical measure $\pi^n(\eta)$ (see \eqref{eq:pi}, Subsection \ref{ssec:model}). Such embeddings immediately define embeddings of $\mathcal{P}(\mathcal{X}_n)$ into $\mathcal{P}(\mathcal{X})$. 

In order to simplify the exposition below, we adopt the following convention: whenever we say that a sequence $(x_n)$ of elements of  $\mathcal{X}_n$ converges to $x \in \mathcal{X}$, we shall mean that $\p_n(x_n) \longrightarrow x$ as $n$ goes to infinity. In particular, the topology used for convergence is implicitly the topology of $\mathcal{X}$, which is assumed to be a separable complete metric space. 
%
The strategy is to characterize possible candidates for the limit  as gradient flows. For that purpose we give a definition of minimizing-movement curves in the metric setting:


\begin{defn}
 Let $(\cX,d)$ be a complete metric space. The \emph{gradient flows} of an energy functional $\mathcal{H} : \cX \to \R$ with respect to the metric $d$  are the curves $\{m_t\}$ s.t. 
$$\mathcal{H}(m_T) - \mathcal{H}(m_0) + \frac{1}{2}\int_0^T{g(m_t)dt} + \frac{1}{2}\int_0^T{|\dot{m}_t|^2dt} = 0,$$ 
where $g$ is the local slope for $\mathcal{H}$, defined as 
\[g(m) := \limsup_{\widetilde m \to m}\frac{\mathcal{H}(m) - \mathcal{H}(\widetilde m)}{d(m,\widetilde m)} \quad \text{ and } \quad |\dot{m}_t| = \limsup_{h \to 0} \frac{d(m_t, m_{t+h})}h.\]
\end{defn}

\begin{rem}
This is not a complete definition. To make it correct, we should introduce the notion of \emph{absolutely continuous curves}, whose slopes are well defined. This is not a real issue here, since we shall only use it for reversible Markov chains (for which the notions have already been well defined previously for curves of strictly positive probability measures) and the heat equation, for which smooth curves of strictly positive functions do not cause any issue (see the next example). We refer to \cite{AGS,Ser} for a more rigorous discussion of the issues in the metric setting.
\end{rem}

\begin{exple}[Heat equation]\label{example}
Let us consider the parabolic PDE
\begin{equation}
\frac{\partial m}{\partial t} = \frac{\partial^2 m}{\partial \theta^2}, \qquad t\geq 0, \; \theta \in (0,1).
\label{eq:heat}
\end{equation} 
We know from \cite{ADPZ2013} that \eqref{eq:heat} is associated to a gradient flow, since we have: 
\begin{multline*}\int_0^1h(m(T,\theta))d\theta - \int_0^1h(m(0,\theta))d\theta \\ + \frac{1}{2}\int_0^T{\int_{0}^1{m(1-m)\Big(\frac{\partial (h'(m))}{\partial \theta}\Big)^2d\theta}dt}+ \frac{1}{2}\int_0^T{\Big\Vert\frac{\partial m}{\partial t}\Big\Vert^2_{-1,m}dt}=0,\end{multline*}
with $h(x) = x\log x + (1-x)\log (1-x)$ and, given $u:[0,1] \to \R$, 
\[||u||_{-1,m}^2 := \sup_J\bigg\{ 2\int_0^1J( \theta)u(\theta)d\theta - \int_0^1m(1-m)\big(J'(\theta)\big)^2d\theta\bigg\},\] 
where the supremum is over all smooth test functions $J$.
\end{exple}

We now state the main result of that section. Hereafter, when we assert that a sequence of curves of probability measures (in $\mathcal{P}(\mathcal{X}_n) \hookrightarrow \mathcal{P}(\cX)$) converges to a deterministic curve $\{m_t\}$ (in $\mathcal{X}$), we mean that it converges to a curve of Dirac measures $\{\delta_{m_t}\}$. Definition \ref{def:conv_loi} below gives a more precise meaning in the case of particle systems. It is important to stress that we study convergence of probability measures, which are deterministic objects.

\begin{thm}\label{theo:grad_flow} 
Let $(a_n)$ be an increasing diverging sequence of positive numbers. We first assume that the topology on $\mathcal{P}(\cX)$ has the following property: 
\begin{quote} \normalsize $(\mathbf{P})$ For any sequence $(\rho_t^n\nu_n)$  of smooth curves of  positive probability measures that converges to some deterministic curve $\{m_t\}$, the following inequalities hold: 
\begin{align}\label{eq:conv0}\liminf_{n \to \infty} & \frac{1}{a_n}\Ent_{\nu_n}(\rho_T^n) \geq \mathcal{H}(m_T)\\
\liminf_{n \to \infty} &\frac{1}{a_n}\int_0^T{\mathcal{I}_n(\rho_t^n)dt} \geq \int_0^T{g(m_t)dt} \label{eq:bound_entropy}\\
\liminf_{n \to \infty}&  \frac{1}{a_n}\int_0^T{\mathcal{A}_n(\rho_t^n, \psi_{t}^n)dt} \geq \int_0^T{|\dot{m}_t|^2dt}, \label{eq:bound_slopes}
\end{align}
where $\psi_t^n$ is such that $(\rho_t^n,\psi_t^n)$ solves \eqref{continuity_equation}. 
\end{quote}

Now consider a sequence $(\rho_t^n\nu_n)$ of gradient flows (so that there is equality in \eqref{eq:gradient}), assume that the initial sequence $(\rho_0^n\nu_n)$ does converge in distribution to some $m_0$, and that moreover
\[\lim_{n \to \infty} \frac{1}{a_n}\Ent_{\nu_n}(\rho_0^n) = \mathcal{H}(m_0).\]
Then, any possible weak limit $\{m_t\}$ of $(\rho_t^n\nu_n)$ is almost surely a gradient flow of the energy $\mathcal{H}$, starting from $m_0$. 
In particular, if gradient flows starting from a given initial data are unique, $(\rho^n_t\nu_n)$ weakly converges to a Dirac measure concentrated on the unique gradient flow of $\cH$ starting from $m_0$.

Moreover, for any $t \in [0,T]$, we have
$$\frac{1}{a_n}\Ent_{\nu_n}(\rho_t^n) \xrightarrow[n\to\infty]{} \mathcal{H}(m_t).$$
\end{thm}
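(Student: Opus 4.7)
The plan is to run the Sandier--Serfaty scheme on the discrete De Giorgi identity. Since each $(\rho_t^n\nu_n)$ is a gradient flow, equality in \eqref{eq:gradient} gives
\[
\frac{1}{a_n}\Ent_{\nu_n}(\rho_T^n)-\frac{1}{a_n}\Ent_{\nu_n}(\rho_0^n)+\frac{1}{2a_n}\int_0^T\mathcal{I}_n(\rho_t^n)\,dt+\frac{1}{2a_n}\int_0^T\mathcal{A}_n(\rho_t^n,\psi_t^n)\,dt=0.
\]
Along a subsequence for which $(\rho_t^n\nu_n)$ converges to some curve $\{m_t\}$, the hypothesis forces the second term to converge to $\mathcal{H}(m_0)$, so the remaining three non-negative contributions sum to $\mathcal{H}(m_0)+o(1)$.

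My next step is to pass to the $\liminf$ and invoke super-additivity, $\liminf(a_n+b_n+c_n)\geq \liminf a_n+\liminf b_n+\liminf c_n$, together with the three bounds \eqref{eq:conv0}, \eqref{eq:bound_entropy}, and \eqref{eq:bound_slopes} supplied by property $(\mathbf{P})$. This will yield
\[
\mathcal{H}(m_T)-\mathcal{H}(m_0)+\tfrac{1}{2}\int_0^T g(m_t)\,dt+\tfrac{1}{2}\int_0^T |\dot{m}_t|^2\,dt\;\leq\;0.
\]
On the other hand, the metric chain-rule/Young inequality $-\tfrac{d}{dt}\mathcal{H}(m_t)\leq |\dot m_t|\,g(m_t)^{1/2}\leq \tfrac{1}{2}g(m_t)+\tfrac{1}{2}|\dot m_t|^2$, integrated from $0$ to $T$, shows that the left-hand side above is always non-negative. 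Combining the two, it must vanish, which is precisely the definition of $\{m_t\}$ being a gradient flow of $\mathcal{H}$ starting from $m_0$. When such gradient flows are unique, every subsequential limit coincides with this one, and no further extraction is required.

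For the entropy convergence at an arbitrary $t\in[0,T]$, I would re-run the argument on the sub-interval $[0,t]$ rather than on $[0,T]$. Solving the resulting identity for $\tfrac{1}{a_n}\Ent_{\nu_n}(\rho_t^n)$ and taking $\limsup$ gives
\[
\limsup_{n\to\infty}\frac{1}{a_n}\Ent_{\nu_n}(\rho_t^n)\;\leq\;\mathcal{H}(m_0)-\tfrac{1}{2}\int_0^t g(m_s)\,ds-\tfrac{1}{2}\int_0^t|\dot m_s|^2\,ds,
\]
and because the De Giorgi functional is additive in its endpoints, non-negative, and vanishes on $[0,T]$, it also vanishes on $[0,t]$, so the right-hand side equals $\mathcal{H}(m_t)$. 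Paired with the matching lower bound $\liminf_n \tfrac{1}{a_n}\Ent_{\nu_n}(\rho_t^n)\geq \mathcal{H}(m_t)$ coming from \eqref{eq:conv0} applied at time $t$, this forces the claimed convergence.

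The hard part, and what actually has to be done model by model, is establishing the three $\liminf$ estimates of property $(\mathbf{P})$; once they are in hand, the scheme above is essentially mechanical. A smaller but genuine subtlety is the interpretation of the ``almost sure'' weak limit: strictly, one first extracts a subsequential weak limit $\mathcal{M}$ of the laws $\{\rho_t^n\nu_n\}_n$ on $C([0,T],\mathcal{X})$ and then applies the argument $\mathcal{M}$-path by $\mathcal{M}$-path, which presupposes that the bounds in $(\mathbf{P})$ survive this disintegration.
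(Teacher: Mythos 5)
Your proposal follows the same Sandier--Serfaty scheme as the paper: write the De Giorgi identity for each $n$ using equality in \eqref{eq:gradient}, divide by $a_n$, pass to the $\liminf$ term by term using $(\mathbf{P})$ and the well-preparedness of the initial data, and conclude that the limiting De Giorgi functional is $\leq 0$, hence $=0$ by its intrinsic non-negativity (your Young-inequality justification of that non-negativity is fine, and slightly more explicit than the paper, which simply asserts it). Your treatment of the entropy convergence at intermediate times --- rerunning the identity on $[0,t]$, solving for the entropy term, and matching $\limsup$ against the $\liminf$ from \eqref{eq:conv0} --- is also exactly what the paper does.

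The one genuine gap is the point you relegate to a closing caveat. The theorem asserts that \emph{any weak limit} $\mathcal{Q}$ of the laws of the trajectories is \emph{almost surely} a gradient flow; a priori $\mathcal{Q}$ need not be a Dirac mass, so the sequence $(\rho_t^n\nu_n)$ does not converge to a deterministic curve and property $(\mathbf{P})$ does not apply directly --- nor can it be applied ``$\mathcal{Q}$-path by $\mathcal{Q}$-path,'' since there is no approximating sequence attached to an individual path of $\mathcal{Q}$. Writing that the argument ``presupposes that the bounds in $(\mathbf{P})$ survive this disintegration'' names the missing step but does not supply it. The paper closes this gap with a separate lemma (Lemma \ref{lem:DetStochLim}): if $\liminf_n \mathbb{E}[f_n(X_n)] \geq f(x)$ for every sequence $(X_n)$ converging in law to a \emph{deterministic} $x$, then $\liminf_n \mathbb{E}[f_n(X_n)] \geq \mathbb{E}[f(X_\infty)]$ for sequences converging in law to a \emph{random} $X_\infty$. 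The proof uses the Skorokhod almost-sure representation theorem, conditions on the value of the limit $Y_\infty$ (so that the conditional sequence converges to a deterministic curve and the hypothesis applies), and then invokes Fatou's lemma. With this lemma, \eqref{eq:conv0}--\eqref{eq:bound_slopes} upgrade to bounds on $\mathcal{Q}$-expectations, the limiting inequality becomes
\begin{equation*}
\mathcal{Q}\Big[\mathcal{H}(m_T) - \mathcal{H}(m_0) + \tfrac{1}{2}\int_0^T g(m_t)\,dt + \tfrac{1}{2}\int_0^T |\dot m_t|^2\,dt\Big] \leq 0,
\end{equation*}
and the non-negativity of the integrand then forces it to vanish $\mathcal{Q}$-almost surely. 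You should incorporate this lemma (or an equivalent argument) rather than assume it; everything else in your write-up is sound.
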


Above $(a_n)$ is a sequence of weights that corresponds to the correct scaling of the system. For particle systems on the discrete torus of length $n$ in dimension $d$ under diffusive scaling, we would take $a_n = n^d$.
This result is a slight variation of the abstract method developed in \cite{Ser}, to which we refer for more details. The main difference (apart from the setting which is restricted to gradient flows in spaces of probability measures arising from reversible Markov chains) is that we consider curves of probability measures that converge to a deterministic curve, rather than any possible limit.

One of the interesting features of this technique is that it does not require an assumption of uniform semi-convexity on the sequence of relative entropies, which can be hard to establish for interacting particle systems (see \cite{EM} for the general theory of geodesic convexity of the entropy for Markov chains, and \cite{FM} for the study of this property for interacting particle systems on the complete graph). Such an assumption of semi-convexity is known as a lower bound on Ricci curvature for the Markov chain, by analogy with the situation for Brownian motion on a Riemannian manifold. For the simple exclusion on the discrete torus, it seems reasonable to conjecture that curvature is non-negative, but this is still an unsolved problem.

\begin{proof}
First of all, for any weak limit  $\mathcal{Q}$ of the laws of the trajectories, we also have 
\begin{equation*}
\liminf_{n \to \infty} \frac{1}{a_n}\int_0^T{\mathcal{E}(\rho_t^n, \log \rho_t^n)dt} \geq \mathcal{Q}\left[\int_0^T{g(m_t)dt} \right]
\end{equation*}
and 
\begin{equation*}
\liminf_{n \to \infty} \frac{1}{a_n}\int_0^T{\mathcal{A}(\rho_t^n, \psi_{t}^n)dt} \geq \mathcal{Q}\left[\int_0^T{|\dot{m}_t|^2dt}\right],
\end{equation*}
where we denote by $\{m_t\}$ a random trajectory with law $\mathcal{Q}$.
This is a direct consequence of the following lemma (whose proof is given below): 
\begin{lem}\label{lem:DetStochLim}
Let $(f_n)$ be a sequence of real-valued, non-negative functions on a space $(\Omega,\mathbb{P})$, and assume that there exists a function $f$ such that for any sequence of random variables $(X_n)$ that converges in law to a deterministic limit $x$, we have
$$\liminf_{n\to\infty} \hspace{1mm} \mathbb{E}[f_n(X_n)] \geq f(x).$$
Then, for any sequence $(X_n)$ of random variables that converges in law to a random variable $X_{\infty}$, we have
$$\liminf_{n\to\infty} \hspace{1mm} \mathbb{E}[f_n(X_n)] \geq \mathbb{E}[f(X_{\infty})].$$
\end{lem}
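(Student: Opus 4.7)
The plan is to combine the Skorokhod representation theorem (which reduces convergence in law to almost sure convergence on a good probability space) with Fatou's lemma, after first extracting a pointwise lower semicontinuity statement from the hypothesis.

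\textbf{Step 1: pointwise lower semicontinuity.} I would first apply the hypothesis to constant sequences of random variables: if $(x_n)$ is a deterministic sequence with $x_n \to x$, then the Dirac masses $\delta_{x_n}$ converge weakly to $\delta_x$, so the hypothesis yields
\[
\liminf_{n \to \infty} f_n(x_n) \;=\; \liminf_{n \to \infty} \mathbb{E}[f_n(x_n)] \;\geq\; f(x).
\]
In other words, the sequence $(f_n)$ $\Gamma$-converges (from below) to $f$ in a pointwise sense along convergent sequences.

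\textbf{Step 2: Skorokhod representation.} Since the ambient space $\mathcal{X}$ is assumed to be a separable complete metric space, I would invoke Skorokhod's representation theorem: there exist random variables $\widetilde{X}_n$ and $\widetilde{X}_\infty$ on a common probability space $(\widetilde{\Omega},\widetilde{\mathbb{P}})$ such that $\widetilde{X}_n$ has the same law as $X_n$, $\widetilde{X}_\infty$ has the same law as $X_\infty$, and $\widetilde{X}_n \to \widetilde{X}_\infty$ almost surely. Applying the pointwise statement of Step 1 along the trajectories of this coupling yields, for $\widetilde{\mathbb{P}}$-almost every $\omega$,
\[
\liminf_{n \to \infty} f_n\big(\widetilde{X}_n(\omega)\big) \;\geq\; f\big(\widetilde{X}_\infty(\omega)\big).
\]

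\textbf{Step 3: Fatou's lemma and conclusion.} Using the nonnegativity of $f_n$ and Fatou's lemma, I would then write
\[
\liminf_{n \to \infty} \mathbb{E}[f_n(X_n)] \;=\; \liminf_{n \to \infty} \widetilde{\mathbb{E}}\big[f_n(\widetilde{X}_n)\big] \;\geq\; \widetilde{\mathbb{E}}\Big[\liminf_{n\to \infty} f_n(\widetilde{X}_n)\Big] \;\geq\; \widetilde{\mathbb{E}}\big[f(\widetilde{X}_\infty)\big] \;=\; \mathbb{E}[f(X_\infty)],
\]
which is the desired conclusion.

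The only delicate point is the applicability of Skorokhod's theorem, which requires the target space to be separable and metrizable; this is guaranteed by the convention made earlier in the section that $\mathcal{X}$ is a separable complete metric space. Beyond that, the argument is a standard lower-semicontinuity-and-Fatou combination, and I do not anticipate any serious obstacle.
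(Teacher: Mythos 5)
Your proof is correct, and its skeleton (Skorokhod's representation theorem followed by Fatou's lemma) is the same as the paper's. The one genuine difference is how the hypothesis is invoked. The paper conditions the coupled sequence $(Y_n)$ on the value of its limit $Y_\infty$ and applies the hypothesis to the \emph{conditional laws}, which converge to a Dirac mass; this requires making sense of conditioning on the possibly null events $\{Y_\infty=y\}$, i.e.\ implicitly invoking regular conditional distributions (available here because the state space is Polish). You instead specialize the hypothesis to constant random variables to extract the pointwise statement $\liminf_n f_n(x_n)\geq f(x)$ for every deterministic sequence $x_n\to x$, and then apply Fatou directly to $\omega\mapsto f_n(\widetilde X_n(\omega))$. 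This is a bit more elementary --- it bypasses conditional expectations entirely --- and it shows the lemma already follows from the weaker assumption that the liminf inequality holds along deterministic sequences. The trade-off is negligible here; both arguments rely on the same separability/completeness assumption for Skorokhod's theorem and on the same implicit measurability of the $f_n$ and of $f(X_\infty)$, which neither proof addresses and which is harmless in the intended application.
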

We now use Proposition \ref{prop_formulation_grad_flow} with the gradient flows $\{\rho_t^n\nu_n\}$, and pass to the limit in 
\[\frac{1}{a_n}\bigg(\Ent_{\nu_n}(\rho^n_T) - \Ent_{\nu_n}(\rho^n_0)+ \frac{1}{2}\int_0^T{\mathcal{E}(\rho_t^n, \log \rho_t^n)dt} + \frac{1}{2}\int_0^T{\mathcal{A}(\rho_t^n, \psi_{t}^n)dt}\bigg) = 0,\] 
and therefore
\begin{equation} \label{eq_conv_proof_ss}
\mathcal{Q}[\mathcal{H}(m_T)] - \mathcal{H}(m_0) + \frac{1}{2}\mathcal{Q}\left[\int_0^T{g(m_t)dt} \right] + \frac{1}{2}\mathcal{Q}\left[\int_0^T{|\dot{m}_t|^2dt}\right] \leq 0.
\end{equation}
Since the above quantity is an expectation of a non-negative functional, we see that
$$\mathcal{H}(m_T) - \mathcal{H}(m_0) + \frac{1}{2}\int_0^T{g(m_t)dt} + \frac{1}{2}\int_0^T{|\dot{m}_t|^2dt} = 0,\quad  \mathcal{Q}\text{-almost surely}. $$
This means that $\mathcal{Q}$-almost surely, $\{m_t\}$ is a gradient flow of $\mathcal{H}$. If uniqueness of gradient flows with initial condition $m_0$ holds, convergence immediately follows.

Convergence of the relative entropy at time $T$  necessarily holds, since otherwise it would contradict \eqref{eq_conv_proof_ss}. Finally, it also holds at any other time $t \in [0,T]$, since one can rewrite the same result on the time-interval $[0,t]$.
\end{proof}

We still have to prove Lemma \ref{lem:DetStochLim}. This proof is taken from \cite{EFLS}.
\begin{proof}[of Lemma \ref{lem:DetStochLim}]
Consider a sequence $(X_n)$ that converges in law to a random variable $X_{\infty}$. Using the almost-sure representation theorem, there exists a sequence $(Y_n)$ such that for any $n$, $Y_n$ has the same law as $X_n$, and  $(Y_n)$ almost surely converges to $Y_{\infty}$. If we condition the whole sequence on the event $\{Y_{\infty} = y\}$, then $(Y_n)$ almost surely converges to $y$. Then we have, using Fatou's lemma
\begin{align*}
\liminf_{n\to\infty} \hspace{1mm} \mathbb{E}[f_n(X_n)]& = \liminf_{n\to\infty} \hspace{1mm} \mathbb{E}[f_n(Y_n)] = \liminf_{n\to\infty} \hspace{1mm} \mathbb{E}_{Y_{\infty}}\big[\mathbb{E}[f_n(Y_n)|Y_{\infty}]\big] \\
&\geq \mathbb{E}_{Y_{\infty}}\left[\liminf_{n\to\infty} \hspace{1mm} \mathbb{E}[f_n(Y_n)|Y_{\infty}]\right] \geq \mathbb{E}_{Y_{\infty}}\left[f(Y_{\infty})\right] = \mathbb{E}[f(X_{\infty})].
\end{align*}
\end{proof}

\section{Symmetric Simple Exclusion Process (SSEP)}
\label{sec:ssep}

\subsection{Model: definitions and notations} \label{ssec:model}
To make notations easier we consider the interacting particle systems on the one-dimensional torus $\T_n=\{0,...,n-1\}$, but the result is valid in any dimension $d\ge 1$. Let us define $\cX_n:=\{0,1\}^{\T_n}$, $\cX:=\{0,1\}^{\Z}$, and $\T= [0,1)$  the continuous torus,  We create a Markov process $\{\eta_t^n \; ; \; t \ge 0 \}$ on the state space $\cX_n$, which satisfies for any $\eta \in \cX_n$: \begin{itemize}
\item $\eta(i)=1$ if there is a particle at site $i \in \T_n$,
\item  $\eta(i)=0$ if the site $i$ is empty,
\item any particle waits independently an exponential time and then jumps to one of its neighbouring sites with probability $1/2$, provided that the chosen site is empty.
\end{itemize} 
We are looking at the evolution of the Markov process in the diffusive time scale, meaning that time is accelerated by $n^2$. The generator is  given for $f:\cX_n\to\R$ by
\[\cL_n(f)(\eta):=  n^2\sum_{i\in\T_n} \eta(i)(1-\eta(i+1)) (f(\eta^{i,i+1})-f(\eta)), \] 
where $\eta^{i,j}$ is the configuration obtained from $\eta$ exchanging the occupation variables $\eta(i)$ and $\eta(j)$. The hydrodynamics behavior of the SSEP is well-known, and we refer the reader to \cite{MR1707314} for a survey.  Let $\nu_\alpha^n$ be the  Bernoulli product measure of parameter $\alpha \in (0,1)$,  the invariant measures for the dynamics. Under $\nu_\alpha^n$, the variables $\{\eta(i)\}_{i \in \T_n}$ are independent with marginals given by 
\[ \nu_\alpha^n\{\eta(i)=1\}=\alpha=1-\nu_\alpha^n\{\eta(i)=0\}.\] Let us fix once and for all $\alpha \in (0,1)$ and denote by $\rho_t^n$ the probability density of the law of $\eta_t^n$ (whose time evolution is generated by $n^2\cL_n$) with respect to $\nu_\alpha^n.$

To prove convergence, we need to embed our particle configurations in a single metric space. For each configuration $\eta \in \cX_n$, we construct a measure on $\cX$ associated to $\eta$, denoted by $\pi^n(\eta)$. We do it here through the \emph{empirical measures}:
\begin{equation}
\pi_t^n(d\theta):=\pi^n(\eta_t^n)(d\theta)=n^{-1} \sum_{i \in \T_n} \eta^n_{t}(i) \delta_{i/n}(d\theta),\label{eq:pi}
\end{equation}
where $\delta_\theta$ stands for the Dirac measure concentrated on $\theta \in \T$. 
Let us denote by $\mathcal{M}_+=\mathcal{M}_+(\cX)$ the space of finite positive measures on $\cX$ endowed with the weak topology. Assume moreover that, for each $n$, $\pi^n:\cX_n\to \mathcal{M}_+$ is a continuous function.  Our goal is to prove the convergence of the flow of measures $(\pi^n(\eta_t^n))$. In particular, $\pi^n$ inherits the Markov property from $\eta^n$. 




We start by defining properly two notions of convergence. For any function $G:\cX \to \R$ and any measure $\pi$ on $\cX$, we denote by $\langle \pi,G\rangle$ the integral of $G$ with respect to the measure $\pi$. In the following $T>0$ is fixed. 

\begin{defn} \label{def:conv_loi}
Let $(\pi_t^n)$ be a sequence of flows of  measures, each element belonging to the Skorokhod space $\mathcal{D}([0,T],\mathcal{M}_+)$. For each $n$, let $\mathcal{Q}_n$ be the probability measure on  $\mathcal{D}([0,T],\mathcal{M}_+)$ corresponding to $\{\pi_t^n\; ; \; t\in[0,T]\}$.
\begin{enumerate}
\item  We say that the  sequence $(\pi_t^n)$ \emph{converges to the deterministic flow} $\{\pi_t\}$ if the probability measure $\mathcal{Q}_n$ converges to
  the  Dirac probability measure concentrated on the deterministic flow $\{\pi_t\}$.
  \item Fix $t \in [0,T]$. We say that $(\pi_t^n)$ \emph{converges in probability} to the deterministic measure $\pi_t \in \mathcal{M}_+$ if, for all smooth test functions $G: \cX \to \R$, and all $\delta >0$,
\begin{equation}
   \mathcal{Q}_n\Big[ \big| \big\langle \pi_t^n, G \big\rangle -  \big\langle \pi_t, G \big\rangle \big| > \delta\Big]  \xrightarrow[n\to\infty]{} 0. \label{eq:conv}
\end{equation}
\end{enumerate}
\end{defn}

The next proposition gives the equivalence between the two notions above.

\begin{prop}
Let $(\pi_t^n)$ be a sequence of flows of measures which converges  to a deterministic flow $\{\pi_t\}$.
  Assume that 
$t \in [0,T] \mapsto \pi_t \in \mathcal{M}_+$ 
 is continuous (with respect to the weak topology). Then,
for any $t \in [0,T]$ fixed, $(\pi_t^n)$ converges in probability to $\pi_t \in \mathcal{M}_+$.  
\end{prop}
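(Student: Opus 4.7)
The plan is straightforward: deduce pointwise-in-time convergence in probability from the pathwise convergence in distribution (of $\mathcal{Q}_n$ to the Dirac measure on $\{\pi_t\}$) by a continuous mapping argument. The only subtlety is to ensure that the evaluation-at-time-$t$ functional is continuous at the deterministic continuous limit path, which is precisely where the hypothesis of continuity of $t \mapsto \pi_t$ is used.

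First I would invoke the following well-known property of the Skorokhod topology on $\mathcal{D}([0,T], \mathcal{M}_+)$: whenever $\omega^n \to \omega$ in the Skorokhod topology and the limit $\omega$ is continuous as a map $[0,T] \to \mathcal{M}_+$, the convergence actually holds uniformly on $[0,T]$. In particular, for every fixed $t \in [0,T]$, the evaluation map $\mathrm{ev}_t : \omega \mapsto \omega(t)$, viewed from $\mathcal{D}([0,T], \mathcal{M}_+)$ into $\mathcal{M}_+$, is continuous at every continuous path. Since $\{\pi_t\}_{t\in[0,T]}$ is continuous by assumption, $\mathrm{ev}_t$ is continuous at this path.

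Next, fix $t\in[0,T]$ and a smooth test function $G : \cX \to \R$. The functional
\[\Phi : \mathcal{D}([0,T],\mathcal{M}_+) \longrightarrow \R, \qquad \omega \longmapsto \langle \omega(t), G\rangle,\]
is the composition of $\mathrm{ev}_t$ with the weakly continuous map $\pi \mapsto \langle \pi, G\rangle$ on $\mathcal{M}_+$. By the preceding step, $\Phi$ is continuous at $\{\pi_s\}_{s\in[0,T]}$. Since $\mathcal{Q}_n$ converges weakly to the Dirac measure $\delta_{\{\pi_s\}}$, the continuous mapping theorem yields that, under $\mathcal{Q}_n$, the random variable $\langle \pi_t^n, G\rangle$ converges in distribution to the constant $\langle \pi_t, G\rangle$.

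Finally, convergence in distribution to a deterministic constant is equivalent to convergence in probability. Concretely, applying the Portmanteau theorem to the closed set $\{x\in\R : |x-\langle \pi_t, G\rangle| \geq \delta\}$ yields
\[\limsup_{n\to\infty}\mathcal{Q}_n\Big[\big|\langle \pi_t^n, G\rangle - \langle \pi_t, G\rangle\big| \geq \delta\Big] \leq 0,\]
which is exactly \eqref{eq:conv}. The only mild obstacle is the invocation of the continuity of $\mathrm{ev}_t$ at a continuous path for the Skorokhod topology on $\mathcal{D}([0,T], \mathcal{M}_+)$; once granted, the argument is a routine application of the continuous mapping theorem followed by the standard equivalence between convergence in law to a constant and convergence in probability.
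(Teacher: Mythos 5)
Your proposal is correct and follows essentially the same route as the paper: continuity of the evaluation map at continuous paths in the Skorokhod topology, the continuous mapping theorem applied to the weak limit $\delta_{\{\pi_t\}}$, and the standard equivalence between convergence in law to a constant and convergence in probability. You are in fact slightly more careful than the paper in noting that the evaluation map is only continuous \emph{at continuous paths} (which suffices since the limit law is concentrated there), and in spelling out the Portmanteau step.
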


\begin{proof}
By assumption, the limiting probability measure on $\mathcal{D}([0,T],\mathcal{M}_+)$ is concentrated on weakly continuous trajectories. Therefore, the limiting flow is almost surely continuous, and the map 
$\{\pi_t\; ; \; t\in[0,T]\} \mapsto \pi_t$
is continuous from  $\mathcal{D}([0,T],\mathcal{M}_+)$ to $\mathcal{M}_+$. Then, for $t \in [0,T]$,  $(\pi_t^n)$ converges in distribution to $\pi_t$. Since the latter is deterministic, this induces  convergence in probability. 
\end{proof}

 We recall here the main result, that we are going to prove in a different way. Recall that $\pi_t^n$ is the empirical measure defined in \eqref{eq:pi} and  $\mathcal{Q}_n$ is the probability measure on  $\mathcal{D}([0,T],\mathcal{M}_+)$ corresponding to the flow $\{\pi_t^n\}$.
\begin{thm}[Hydrodynamic limits for the SSEP]\label{thm:ssep}
Fix a density profile $m_0: \T \to [0,1]$ and let $(\mu^n)$ be a sequence of probability measures such that, under $\mu^n$, the sequence $(\pi_0^n(d\theta))$ converges in probability to $m_0(\theta)d\theta$. In other words,
\[
\limsup_{n\to\infty}\mu^n \bigg[\Big|n^{-1} \sum_{i\in\T_n}G(i/n) \eta(i) - \int_\T G(\theta) m_0(\theta) d\theta \Big| > \delta\bigg]=0,
\]
for any $\delta >0$ and any smooth function $G: \T \to \R$. Assume moreover that this initial data is \emph{well-prepared}, in the sense that: 
\begin{equation}\lim_{n\to\infty} \frac{1}{n}\Ent_{\nu_{\alpha}^n}(\mu^n) = \int_{\T}{h(m_0(\theta))d\theta} - h\Big(\int_\T{m_0(\theta)d\theta}\Big),\label{eq:well-prep}\end{equation}
where $h$ has been defined in Example \ref{example}. Then, for any $t>0$, the sequence $\{\pi_t^n\}_{n \in \N}$ converges in probability to the deterministic measure $\pi_t(d\theta)=m(t,\theta)d\theta$ where $m$ is solution to the heat equation \eqref{eq:heat} on $\R_+\times\T$. The entropy also converges: 
$$\lim_{n\to\infty} \frac{1}{n}\Ent_{\nu^n_{\alpha}}(\mu^n_t) = \int_{\T}{h(m(t,\theta))d\theta} - h\Big(\int_\T{m(t,\theta)d\theta}\Big).$$

\end{thm}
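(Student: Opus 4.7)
The plan is to apply Theorem \ref{theo:grad_flow} with the scaling $a_n = n$, the embedding $\p_n:\cX_n\to\M_+$ given by the empirical measure $\pi^n$, and the candidate limiting functional
\[
\mathcal{H}(m)=\int_\T h(m(\theta))\,d\theta - h\Bigl(\int_\T m(\theta)\,d\theta\Bigr),
\]
defined on absolutely continuous profiles $m(\theta)d\theta$ with $m(\theta)\in[0,1]$. By Example \ref{example}, the gradient flow of $\mathcal{H}$ starting from $m_0$ is the unique weak solution of the heat equation \eqref{eq:heat} with that initial condition. Once Theorem \ref{theo:grad_flow} applies, this uniqueness will force the whole sequence $(\mathcal{Q}_n)$ to converge to the Dirac mass on $\{m(t,\theta)d\theta\}$, and the entropy convergence statement will then follow from the last assertion of Theorem \ref{theo:grad_flow}.

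The first step is tightness of $(\mathcal{Q}_n)$ in $\cD([0,T],\M_+)$. Spatial compactness is immediate from the uniform mass bound $\pi^n(\eta)(\T)\le 1$, while Aldous' time-regularity criterion follows by combining Proposition \ref{prop_formulation_grad_flow} with the well-prepared condition \eqref{eq:well-prep}: the latter yields $\Ent_{\nu_\alpha^n}(\rho^n_0)=O(n)$, and the gradient flow identity then gives $O(n)$ bounds on the time-integrals of $\mathcal{I}_n(\rho^n_t)$ and $\mathcal{A}_n(\rho^n_t,\psi_t^n)$. Standard arguments imply that every subsequential limit of $(\mathcal{Q}_n)$ is concentrated on continuous trajectories of the form $\{m(t,\theta)d\theta\}$ with $m(t,\theta)\in[0,1]$.

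The core of the proof is the verification of property $(\mathbf{P})$ along any such convergent subsequence. For the entropy bound \eqref{eq:conv0}, the idea is to partition $\T_n$ into mesoscopic blocks of size $\ell_n$ with $1\ll\ell_n\ll n$ and to use the convexity and subadditivity of the relative entropy for product reference measures: the contribution of each block is controlled by the Kullback--Leibler divergence of a Bernoulli$(m(\theta))$ law with respect to Bernoulli$(\alpha)$, which after invoking the conservation of mass $\int_\T m(t,\theta)\,d\theta=\int_\T m_0$ produces exactly $\mathcal{H}(m)$. For the Fisher information bound \eqref{eq:bound_entropy}, the elementary inequality $(a-b)(\log a-\log b)\ge 4(\sqrt a-\sqrt b)^2$ rewrites $\mathcal{I}_n$ as a discrete $H^1$-type Dirichlet form, to which one applies a one-block/two-block replacement on scale $\ell_n$; this recovers in the limit the continuous integrand $(\partial_\theta m)^2/(m(1-m))$ appearing in Example \ref{example}. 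For the action bound \eqref{eq:bound_slopes}, the strategy is to use the dual formulation of $\mathcal{A}_n$ as a supremum over test functions $\phi:\cX_n\to\R$ (via \eqref{continuity_equation}, which pairs $\phi$ with $\dot\rho_t^n$), specialized to linear observables $\phi(\eta)=\sum_i J(i/n)\eta(i)$ for smooth $J:\T\to\R$; bounding the logarithmic mean $\hat\rho_t^n$ by local density factors through \eqref{eq:logarith_ineq} then reproduces exactly the variational definition of $\|\partial_t m\|_{-1,m}^2$ from Example \ref{example}.

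The main obstacle will be the action lower bound \eqref{eq:bound_slopes}: the potential $\psi_t^n$ is only implicitly defined by \eqref{continuity_equation} and has no convenient microscopic expression, while the Onsager coefficient $m(1-m)$ governing the $\|\cdot\|_{-1,m}$-norm degenerates at the boundary values $0$ and $1$, so the replacement of $\hat\rho_t^n$ by local density factors requires a careful truncation of the configurations where the empirical profile is close to $0$ or $1$. The other two bounds reduce to variants of the classical replacement lemma, now reinterpreted through the gradient flow formalism.
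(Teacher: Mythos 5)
Your overall architecture coincides with the paper's: apply Theorem \ref{theo:grad_flow} with $a_n=n$ and the empirical-measure embedding, get tightness from the classical compactness argument, verify property $(\mathbf{P})$ via the replacement lemma, and conclude from uniqueness of the heat-equation gradient flow. Your treatment of the action bound \eqref{eq:bound_slopes} is exactly the paper's Proposition \ref{prop:ineq}: pair the continuity equation with linear observables $J(t,\eta)=\sum_i G(t,i/n)\eta(i)$, bound $\hat\rho^n_t$ by the arithmetic mean via \eqref{eq:logarith_ineq}, and use the replacement lemma to produce the mobility $m(1-m)$. Note that the truncation near $m\in\{0,1\}$ you anticipate is not needed: after the change of variables $\eta\mapsto\eta^{i,i+1}$ the quadratic term becomes the expectation of the local function $\eta(i)(1-\eta(i+1))$ against $\rho^n_t\nu_\alpha^n$, and the replacement lemma gives its \emph{convergence} to $\int m(1-m)(G')^2$, which is all that the variational formula for $\|\cdot\|_{-1,m}$ requires; the degeneracy of the coefficient never enters.

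The one place where your route genuinely diverges, and where it has a gap as stated, is the Fisher-information bound \eqref{eq:bound_entropy}. The inequality $(a-b)(\log a-\log b)\ge 4(\sqrt a-\sqrt b)^2$ does give $\mathcal{I}_n(\rho)\ge 4\mathcal{E}_n(\sqrt\rho,\sqrt\rho)$ with the right leading-order constant, but you cannot then ``apply a one-block/two-block replacement'' to $\mathcal{E}_n(\sqrt\rho,\sqrt\rho)$: the replacement lemma controls space--time averages of \emph{local functions of the configuration} integrated against the law of the process, whereas the Dirichlet form of $\sqrt{\rho^n_t}$ is a nonlinear functional of the density with no such representation. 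To make a $\sqrt\rho$-based argument work you would need an additional variational characterization of the Dirichlet form (e.g.\ testing against $e^J$ with $J$ linear in $\eta$, as in the large-deviations lower bounds), which your plan does not supply. The paper avoids this entirely by a linearization specific to the logarithmic mean: in the duality bound
$\mathcal{E}(\rho,\log\rho)\ge 2\sum\nabla(\log\rho)\,\nabla J\,\hat\rho K\nu-\sum(\nabla J)^2\hat\rho K\nu$
one uses the exact identity $\hat\rho(x,y)\bigl(\log\rho(y)-\log\rho(x)\bigr)=\rho(y)-\rho(x)$, so the cross term becomes \emph{linear} in $\rho$; a discrete summation by parts then turns it into $-\int\int G''\,m_t$, directly controlled by the empirical measure, and only the quadratic remainder needs the replacement lemma. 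You should either adopt this identity or add the missing variational step; without one of the two, the liminf in \eqref{eq:bound_entropy} is not established. The entropy bound \eqref{eq:conv0} via block subadditivity is fine and equivalent to the standard large-deviations/$\Gamma$-convergence statement the paper cites (keeping in mind that the formula $\int h(m)-h(\int m)$ tacitly uses $\alpha=\int m_0$, so the linear terms in the Bernoulli relative entropies must be handled via conservation of mass, as you indicate).
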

Note that $\int m(t,\theta)d\theta$ is actually a constant, given by the fixed density of particles. The convergence of the entropy is equivalent to the local Gibbs behavior (see \cite{Kos}). Hence, the assumptions and conclusions are those obtained with the relative entropy method of \cite{yau}. However, the techniques and restrictions are the same as for the entropy method of \cite{GPV}: we do not use smoothness of solutions to the hydrodynamic PDE, but we use the replacement lemma (see Subsection \ref{sec:bounds}), which relies on the two-block estimate, rather than the one-block estimate alone as in \cite{yau}.

\subsection{The gradient flow approach to Theorem \ref{thm:ssep}}

We are going to apply Theorem \ref{theo:grad_flow} to obtain Theorem \ref{thm:ssep}. The main steps are as follows: 

\begin{enumerate}
\item We first need to prove that the sequence $(\mathcal{Q}_n)$ is relatively compact, so that there exists a converging subsequence. Such an argument was already part of the entropy method of \cite{GPV}. We refer to \cite{MR1707314}[Chapter 4, Section 2] for the proof in the context of the simple exclusion process.

\item In order to prove \eqref{eq:conv0}, we have to investigate the convergence of the relative entropy with respect to the invariant measure $\nu_\alpha^n$ towards the free energy associated to the limiting PDE \eqref{eq:heat}, which in our case reads as
\[
\mathcal{H}(m)=\int_\T{h(m(\theta))d\theta} - h\Big(\int_\T{m(\theta) d\theta}\Big).
\] This result is actually equivalent to the large deviation principle for  $\nu^n_{\alpha}$ (see for example \cite{Mar}), and is standard (see \cite{MR1707314}).
Moreover, if our initial data is close (in relative entropy) to a slowly varying Bernoulli product measure\footnote{This is also the assumption used  to make Yau's relative entropy method work, see \cite{yau}.} associated to $m$, which satisfies
$
\nu^n_{\rho(\cdot)}\{\eta(i)=1\}=m(i/n),
$ then its relative entropy with respect to $\nu_\alpha^n$ converges to the limiting free energy, so that we can easily have  \eqref{eq:well-prep}.

\item We prove the lower bound for the entropy production along curves \eqref{eq:bound_entropy}  and the lower bound for the slopes \eqref{eq:bound_slopes} in Subsection \ref{sec:bounds}. 

\item When passing to the limit, we obtain that for any weak limit $\mathcal{Q}$ of $(\mathcal{Q}_n)$,
\begin{multline*}
\Q\bigg[\int_\T h(m(T,\theta))d\theta - \int_\T h(m(0,\theta))d\theta + \\ \frac{1}{2}\int_0^T{\int_\T {m(1-m)\Big(\frac{\partial (h'(m))}{\partial \theta}\Big)^2d\theta}dt} + \frac{1}{2}\int_0^T{\Vert\dot m_t\Vert^2_{-1,m}\; dt}\bigg] \leq 0.\end{multline*}
Since the expression inside the expectation is the characterization of solutions to the heat equation as minimizing-movement curves, it is non-negative, and almost surely $m$ is a solution to the heat equation. Uniqueness of solutions starting from $m_0$ allows us to conclude.

\end{enumerate}



\subsection{Bounds and convergence}
\label{sec:bounds}

 Here we prove  that \eqref{eq:bound_entropy} and \eqref{eq:bound_slopes} are satisfied for the density $\rho_{t}^n$ of the SSEP accelerated in time, assuming that the empirical measure $(\pi_t^n(d\theta))$ converges  to a deterministic curve $m_t(\theta)d\theta$.
Let us start with \eqref{eq:bound_slopes}. The argument is based on a duality argument (Proposition \ref{prop:ineq}) and on the replacement lemma (Lemma \ref{lem:replacement}) which is commonly used in the literature (see for example \cite{MR1707314}).

\begin{prop}\label{prop:ineq} Consider a couple $(\rho_t, \psi_t)$ satisfying the continuity equation \eqref{continuity_equation} for almost every $t\ge 0$. For any smooth (in time) function $J: [0,T] \times \mathcal{X} \to \R$, 
\begin{multline*} \int_0^T{\mathcal{A}(\rho_t, \psi_t)dt} \geq 2\sum_{x\in\cX} J(T,x)\rho_T(x)\nu(x) - 2\sum_{x\in\cX} J(0,x)\rho_0(x)\nu(x)  \\
- 2\int_0^T{\sum_{x\in\cX} \partial_t{J}(t,x)\rho_t(x) \nu(x)dt} -\int_0^T{\sum_{x,y} (J(t,x)-J(t,y))^2\hat{\rho}_t(x,y)K(x,y)\nu(x)dt}. 
\end{multline*}

\end{prop}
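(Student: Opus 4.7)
The plan is a duality argument: test the continuity equation \eqref{continuity_equation} against the smooth function $J$, integrate in both time and space, and close with Young's inequality. This is the discrete counterpart of the variational ($H^{-1}$-type) characterization of the action used in the continuous setting for parabolic PDEs.

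The first step is to multiply the continuity equation by $J(t,x)\nu(x)$ and sum over $x$, producing
\[\sum_x J(t,x)\dot\rho_t(x)\nu(x)= -\sum_{x,y} J(t,x)(\psi_t(y)-\psi_t(x))\hat\rho_t(x,y) K(x,y)\nu(x).\]
Using reversibility $K(x,y)\nu(x)=K(y,x)\nu(y)$ together with the symmetry $\hat\rho_t(x,y)=\hat\rho_t(y,x)$, I relabel $x\leftrightarrow y$ in the sum on the right and symmetrize to obtain the cleaner bilinear form $\tfrac{1}{2}\sum_{x,y}(J(t,y)-J(t,x))(\psi_t(y)-\psi_t(x))\hat\rho_t(x,y) K(x,y)\nu(x)$. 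This step is essentially the discrete integration-by-parts formula \eqref{eq:ipp} applied to the antisymmetric flux $(x,y)\mapsto (\psi_t(y)-\psi_t(x))\hat\rho_t(x,y)$.

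Integrating over $t\in[0,T]$ and integrating by parts in time on the left-hand side then yields the exact identity
\[2\Big(\sum_x J(T,x)\rho_T(x)\nu(x) - \sum_x J(0,x)\rho_0(x)\nu(x) - \int_0^T\sum_x \partial_t J(t,x)\rho_t(x)\nu(x)\,dt\Big) = \int_0^T\sum_{x,y}(J(t,y)-J(t,x))(\psi_t(y)-\psi_t(x))\hat\rho_t(x,y) K(x,y)\nu(x)\,dt.\]
To close, I apply Young's inequality $2ab\leq a^2+b^2$ pointwise with $a=\psi_t(y)-\psi_t(x)$ and $b=J(t,y)-J(t,x)$: summing against $\hat\rho_t(x,y)K(x,y)\nu(x)$ and integrating in time, the $a^2$-piece reproduces exactly $2\int_0^T\mathcal{A}(\rho_t,\psi_t)\,dt$ (since $\mathcal{A}=\tfrac{1}{2}\sum_{x,y}(\psi_t(y)-\psi_t(x))^2\hat\rho_t K\nu$), while the $b^2$-piece yields the $(J(x)-J(y))^2$ correction term appearing in the statement. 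Rearranging produces the announced lower bound. The only point of care is the symmetrization step, which relies on the reversibility of $\nu$; everything else reduces to Fubini plus Young, so no real obstacle arises.
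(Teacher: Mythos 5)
Your argument is correct and follows the same route as the paper's proof: multiply the continuity equation by $J(t,x)\nu(x)$, symmetrize in $x,y$ using reversibility (the integration-by-parts formula \eqref{eq:ipp}), integrate by parts in time, and close with Young's inequality. The only cosmetic difference is that your application of $2ab\le a^2+b^2$ actually produces the correction term $-\tfrac12\int_0^T\sum_{x,y}(J(t,x)-J(t,y))^2\hat\rho_t K\nu\,dt$, a slightly \emph{stronger} bound than stated, which implies the proposition since that term is non-negative.
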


\begin{proof}
From the continuity equation \eqref{continuity_equation}, we have
\begin{align*}
& \sum_{x\in\cX} J(T,x)\rho_T(x)\nu(x) - \sum_{x\in\cX} J(0,x)\rho_0(x)\nu(x)  \\ & = \int_0^T{\sum_{x\in\cX} \partial_t{J}(t,x)\rho_t(x) \nu(x) + J(t,x)\dot{\rho}_t(x)\nu(x) dt}   = \int_0^T{\sum_{x\in\cX} \partial_t{J}(t,x)\rho_t(x) \nu(x)dt}\\
& \qquad \qquad - \int_0^T{\sum_{x,y\in\cX} J(t,x)(\psi_t(y) - \psi_t(x))\hat{\rho}_t(x,y)K(x,y)\nu(x)dt}.
\end{align*}
We symmetrize in $x$ and $y$ the last term, and get
\begin{multline*}
2\sum_{x\in\cX} J(T,x)\rho_T(x)\nu(x) - 2\sum_{x\in\cX} J(0,x)\rho_0(x)\nu(x) = 2\int_0^T{\sum_{x\in\cX} \partial_t{J}(t,x)\rho_t(x) \nu(x)dt}  \\
- \int_0^T{\sum_{x,y\in\cX} (J(t,x)-J(t,y))(\psi_t(y) - \psi_t(x))\hat{\rho}_t(x,y)K(x,y)\nu(x)dt}, 
\end{multline*}
and therefore 
\begin{multline*}
\frac{1}{2n}\int_0^T{\mathcal{A}(\rho_t, \psi_t)dt} 
\geq \frac{1}{n}\sum_{x\in\mathcal{X}} J(T,x)\rho_T(x)\nu(x) - \frac{1}{n}\sum_{x\in\mathcal{X}} J(0,x)\rho_0(x)\nu(x)\\
  - \frac{1}{n}\int_0^T{\sum_{x\in\mathcal{X}} \partial_t{J}(t,x)\rho_t(x) \nu(x)dt} 
 -\frac{1}{2n}\int_0^T{\sum_{x,y} (J(t,x)-J(t,y))^2\hat{\rho}_t(x,y)K(x,y)\nu(x)dt.} 
\end{multline*}\end{proof}

To apply Proposition \ref{prop:ineq} to the SSEP, we consider  observables of the form
\begin{equation} J(t,\eta) = \sum_{i\in\T_n} G\Big(t,\frac i n\Big)\eta(i)\label{eq:J}\end{equation} for smooth functions $G:[0,T] \times \T\to\R$. For any $\ell \in \N$ and  $i\in \T_n$, we denote by $\eta^\ell(i)$ the empirical density of particles in a box of size $2\ell+1$ centered at $i$: \[ \eta^\ell(i):=\frac{1}{2\ell+1} \sum_{|j-i|\le \ell} \eta(i).\]
Hereafter we also denote by  $\tau_x$ the translated operator that acts on local functions $g:\{0,1\}^{\mathbb{Z}}\to\R$ as $(\tau_x g)(\eta):=g(\tau_x \eta)$, and $\tau_x \eta$ is the configuration obtained from $\eta$ by shifting: $(\tau_x \eta)_y=\eta_{x+y}$. The main tool that we are going to use is the well-known {replacement lemma}, which is a consequence of the averaging properties of the SSEP. We recall the main statement and refer the reader to \cite{GPV,MR1707314} for a proof:

\begin{lem}[Replacement Lemma]\label{lem:replacement}
Denote by $\mathbb{P}_{\mu^n}$ the probability measure on the Skorokhod space $\mathcal{D}([0,T],\cX_n)$ induced by the Markov process $\{\eta_{t}^n\}_{t\geq 0}$ starting from $\mu^n$. Then, for every $\delta >0$ and every local function $g$,
\[\limsup_{\varepsilon \to 0} \limsup_{n\to\infty} \mathbb{P}_{\mu^n}\bigg[\int_0^T n^{-1} \sum_{x\in\T_n} \tau_x V_{\varepsilon n}(\eta_{s})ds \ge \delta \bigg] = 0, \]
where 
\[ V_\ell(\eta)=\bigg|\frac{1}{2\ell+1} \sum_{|y|\le \ell} \tau_yg(\eta) - \tilde g (\eta^\ell(0))\bigg| 
\]
and $\tilde g : (0,1) \to \R$ corresponds to the expected value: $\tilde g(\alpha):= \int g(\eta) d\nu_\alpha(\eta).$
\end{lem}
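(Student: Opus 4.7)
The plan is to follow the now-classical two-step strategy that reduces the lemma to a one-block estimate and a two-block estimate, both controlled by a Dirichlet form bound obtained from the entropy inequality and the Feynman--Kac formula. The key input from the rest of the paper is that we work in the regime where the initial relative entropy $\Ent_{\nu^n_\alpha}(\mu^n)$ is $O(n)$, as granted by the well-preparedness assumption \eqref{eq:well-prep}.

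First, for any $a>0$, the exponential Chebyshev inequality gives
\[\P_{\mu^n}\bigg[\int_0^T n^{-1} \sum_{x\in\T_n} \tau_x V_{\varepsilon n}(\eta_s)\, ds \ge \delta\bigg] \le e^{-a\delta n}\; \E_{\mu^n}\bigg[\exp\Big(a\int_0^T \sum_{x\in\T_n}\tau_x V_{\varepsilon n}(\eta_s)\, ds\Big)\bigg].\]
The entropy inequality then replaces $\E_{\mu^n}$ by an equilibrium expectation under $\nu^n_\alpha$ up to an additive $\Ent_{\nu^n_\alpha}(\mu^n)/a$ contribution, and the Feynman--Kac formula together with the Rayleigh--Ritz variational principle applied to the self-adjoint operator $\cL_n + a\sum_x \tau_x V_{\varepsilon n}$ bounds the equilibrium exponential moment by $\exp(T\lambda_n)$ with
\[\lambda_n \le \sup_{f} \Big\{ a\sum_{x\in\T_n} \E_{\nu^n_\alpha}\big[\tau_x V_{\varepsilon n}\, f^2\big] - \mathcal{E}_n(f,f)\Big\},\]
supremum taken over densities $f^2$ with $\E_{\nu^n_\alpha}[f^2]=1$. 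Note that $\mathcal{E}_n$ already carries the diffusive acceleration $n^2$ inherited from $\cL_n$. Choosing $a$ of order $n$ makes the entropy term $O(1)$, so the whole question reduces to showing that the supremum above, divided by $n$, is negligible as first $n\to\infty$ and then $\varepsilon\to 0$.

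To establish that, I would run the standard two-block argument. The \emph{one-block estimate} replaces the microscopic average $(2\ell+1)^{-1}\sum_{|y|\le\ell}\tau_y g$ by $\tilde g(\eta^\ell(0))$ for $\ell$ fixed but large; its proof rests on the spectral gap of the SSEP restricted to a box of size $2\ell+1$ with fixed particle number (the canonical measure is uniform on each hyperplane), combined with the equivalence of ensembles, which ensures that the canonical expectation of $g$ is well approximated by $\tilde g$ evaluated at the empirical density. The \emph{two-block estimate} then identifies $\eta^\ell(0)$ with the mesoscopic density $\eta^{\varepsilon n}(0)$ by coupling distant pairs of boxes through an auxiliary exchange dynamics whose Dirichlet form is dominated by $\mathcal{E}_n$.

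The main obstacle is the two-block estimate: one must swap particles between two boxes separated by a macroscopic distance while paying only an $O(n)$ cost in Dirichlet form, which would be impossible for nearest-neighbour jumps on their own; it is precisely the diffusive acceleration $n^2$ embedded in $\cL_n$ that makes the chain of $O(n)$ elementary exchanges affordable. Once both estimates are in place, one sends $\ell\to\infty$, then $n\to\infty$, then $\varepsilon\to 0$, and the combination of bounds forces the bracket in $\lambda_n$ to vanish, which together with the $O(1)$ entropy term yields the claim.
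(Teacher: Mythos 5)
The paper offers no proof of this lemma: it is quoted as a known result and the reader is referred to \cite{GPV} and \cite{MR1707314}. Your sketch is exactly the standard proof from those references --- reduction via an entropy/Feynman--Kac argument to a variational eigenvalue problem penalized by the diffusively accelerated Dirichlet form, then the one-block estimate (ergodicity/spectral gap on a box at fixed particle number plus equivalence of ensembles) and the two-block estimate (moving-particle lemma, made affordable by the $n^{2}$ speed-up) --- so you are on the intended route and the two substantive estimates are correctly identified.

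Two corrections to your reduction step are needed, however. First, the bound $\Ent_{\nu^{n}_{\alpha}}(\mu^{n})=O(n)$ is \emph{not} an input from the well-preparedness assumption \eqref{eq:well-prep}: it holds for every probability measure on $\{0,1\}^{\T_{n}}$, since $d\mu^{n}/d\nu^{n}_{\alpha}\le(\alpha\wedge(1-\alpha))^{-n}$ pointwise. This matters because the lemma as stated carries no hypothesis on $\mu^{n}$, and your argument would otherwise prove only a restricted version. Second, the opening chain of inequalities is not valid as written: after the exponential Chebyshev bound you cannot invoke ``the entropy inequality'' to replace $\E_{\mu^{n}}[e^{aW}]$ by an equilibrium expectation plus $\Ent_{\nu^{n}_{\alpha}}(\mu^{n})/a$ --- the Donsker--Varadhan inequality controls $\E_{\mu^{n}}[W]$, not $\E_{\mu^{n}}[e^{aW}]$. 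Use one route or the other: either $\E_{\mu^{n}}[W]\le\gamma^{-1}\bigl(\Ent_{\nu^{n}_{\alpha}}(\mu^{n})+\log\E_{\nu^{n}_{\alpha}}[e^{\gamma W}]\bigr)$ followed by Markov's inequality (the route of \cite{MR1707314}), or exponential Chebyshev followed by the crude change of measure $\E_{\mu^{n}}\le e^{Cn}\E_{\nu^{n}_{\alpha}}$. In either case the tilting parameter should be a large \emph{constant} sent to infinity last, not of order $n$: with $a\sim n$ the eigenvalue bound only forces the (unaccelerated) Dirichlet form of the optimizing density to be $O(1)$ rather than $O(1/n)$, which is not small enough for the one- and two-block estimates to apply. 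These are bookkeeping repairs; the skeleton of the argument is the correct one.
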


We are now able to conclude the proof. We treat separately the terms in the right-hand side of Proposition \ref{prop:ineq}, taking $J$ as in \eqref{eq:J}. Since, for any fixed $t$,  $(\pi_t^n(d\theta))$ converges in probability to $\pi_t(d\theta)=m_t(\theta)d\theta$ we have
\[
\frac{1}{n}\sum_{\eta \in \mathcal{X}_n} J(T,\eta)\rho_{T}^n(\eta)\nu_\alpha^n(\eta) = \cQ_n\big[\langle \pi_T^n, G\rangle\big]  \xrightarrow[n\to\infty]{} \int_\T G(T,\theta) m_T(\theta) d\theta.
\]
And the same happens at initial time for $\rho_0^n$. Similarly,
$$\frac{1}{n}\int_0^T \sum_{\eta \in \cX_n}  \frac{\partial J}{\partial t}(t,\eta) \rho^n_t(\eta)\nu_\alpha^n(\eta)dt \longrightarrow \int_0^T{\int_{\T}{\frac{\partial G}{\partial t}(t,\theta)m_t(\theta)d\theta}dt}.$$
Then, we write 
\begin{multline*}
\int_0^T \frac{1}{n}\sum_{\eta,\eta'\in\cX_n} (J(t,\eta)-J(t,\eta'))^2\; \widehat{\rho_{t}^n}(\eta,\eta')K_n(\eta,\eta')\nu_\alpha^n(\eta)dt \\
 = \frac{n^2}{n}\int_0^T \sum_{\eta, i} \Big[G\Big(t,\frac{i}{n}\Big)-G\Big(t,\frac{i+1}{n}\Big)\Big]^2 \; {\eta(i)(1-\eta(i+1))}\; \widehat{\rho_{t}^n}(\eta,\eta^{i,i+1}) \nu_\alpha^n(\eta) dt.
\end{multline*}
We now use the logarithmic inequality \eqref{eq:logarith_ineq} and write that the latter is smaller than
\[
n\int_0^T \sum_{\eta,i} \Big[G\Big(t,\frac{i}{n}\Big)-G\Big(t,\frac{i+1}{n}\Big)\Big]^2 \;  {\eta(i)(1-\eta(i+1))}\; \frac{\rho_{t}^n(\eta)+\rho_{t}^n(\eta^{i,i+1})}{2} \nu_\alpha^n(\eta) dt.
\]
From the invariance property of $\nu_\alpha^n$ with respect to the change of variables $\eta \to \eta^{i,i+1}$, and from the smoothness of $G$ we get that the above quantity is equal to
\begin{equation}\frac{1}{n}\int_0^T \sum_{\eta \in \cX_n} \sum_{i \in \T_n} \Big[G'\Big(t,\frac{i}{n}\Big)\Big]^2 \;  {\eta(i)(1-\eta(i+1))}\; \rho_{t}^n(\eta) \nu_\alpha^n(\eta) dt + o\Big(\frac{1}{n}\Big), \label{eq:rep1}\end{equation}
where $G'$ denotes the space derivative of $G$. 

Above we want to replace $\eta(i)(1-\eta(i+1))$ by $m(i/n)(1-m(i/n))$. For $\varepsilon >0$ we define the approximation of the identity $i_\varepsilon(u)=(2\varepsilon)^{-1} \mathbf{1}\{|u|\le \varepsilon\}.$
With that notation, $\eta_{t}^{\varepsilon n}(0)$ is very close to $\langle \pi_t^n, i_\varepsilon \rangle$. Let us denote $h(\eta):=\eta(0)(1-\eta(1))$. Since $G$ is a smooth function, \eqref{eq:rep1} equals 
\[
\frac{1}{n}\int_0^T \sum_{\eta \in \cX_n} \sum_{i \in \T_n} \frac{1}{2\varepsilon n+1}\sum_{|j-i|\le \varepsilon n}\Big[G'\Big(t,\frac{j}{n}\Big)\Big]^2 \; \tau_ih(\eta) \rho_{t}^n(\eta) \nu_\alpha^n(\eta) dt + O(\varepsilon^2).
\] 
A summation by parts shows that the previous term can be written as 
\[
\frac{1}{n}\int_0^T \sum_{\eta \in \cX_n} \sum_{i \in \T_n} \Big[G'\Big(t,\frac{i}{n}\Big)\Big]^2  \frac{1}{2\varepsilon n+1}\sum_{|j-i|\le \varepsilon n} \tau_jh(\eta) \rho_{t}^n(\eta) \nu_\alpha^n(\eta) dt + O(\varepsilon^2).
\]
By Lemma \ref{lem:replacement}, this expression is then equal to 
\[
\frac{1}{n}\int_0^T \sum_{\eta \in \cX_n} \sum_{i \in \T_n} \Big[G'\Big(t,\frac{i}{n}\Big)\Big]^2   \tau_i\tilde h\big(\langle \pi_t^n(\eta),i_\varepsilon\rangle\big)  \nu_\alpha^n(\eta) dt + R_{n,\varepsilon,T},
\] 
 where $R_{n,\varepsilon,T}$ vanishes in probability as $n$ goes to infinity and then $\varepsilon$ goes to 0. From the convergence in probability of $(\pi_t^n)$, the last expression converges to
\[
\int_0^T \int_\T m_t(\theta)(1-m_t(\theta)) (G'(t,\theta))^2 \; d\theta dt.
\]
As a result, since the convergences above are valid for any smooth function $G$, 
\begin{align*}
\liminf_{n \to \infty}  \frac{1}{n}\int_0^T{\mathcal{A}(\rho_t^n, \psi_{t}^n)dt} & \geq \int_0^T  \sup_G\bigg\{ 2\int_\T G\;\dot m_t\;d\theta - \int_\T m_t(1-m_t)(G')^2d\theta\bigg\}dt\\
& =\int_0^T{\Vert\dot m_t\; \Vert^2_{-1,m}\; dt}.\end{align*}
In the same way, we need to prove
$$\liminf_{n\to\infty} \frac{1}{n}\int_0^T{\mathcal{E}(\rho_t^n, \log \rho_t^n)dt} \geq \int_0^T{\int_\T {m(1-m)\Big(\frac{\partial (h'(m))}{\partial \theta}\Big)^2d\theta}dt}.$$
Since the arguments are essentially the same as for the slopes, we shall be more brief in the exposition. We denote
\[
\nabla_nG\Big(t,\frac{i}{n}\Big)=n\Big[G\Big(t,\frac{i+1}{n}\Big) - G\Big(t,\frac{i}{n}\Big)\Big].
\] By duality, we have
\begin{align*}
&\frac{1}{2n}\int_0^T{\mathcal{E}(\rho_t^n, \log \rho_t^n)dt}  \\
& \geq \int_0^T \sum_{\eta, i}(\log \rho^n_t(\eta^{i,i+1}) - \log \rho^n_t(\eta))\nabla_n G\Big(t,\frac{i}{n}\Big)\eta_i(1-\eta_{i+1})\widehat{\rho^n_t}(\eta, \eta^{i,i+1})\nu_{\alpha}^n(\eta)dt \\
&\quad -\frac1{2n}\int_0^T{\sum_{\eta, i}\Big[\nabla_nG\Big(t,\frac{i}{n}\Big)\Big]^2\eta_i(1-\eta_{i+1})\widehat{\rho^n_t}(\eta, \eta^{i,i+1})\nu_{\alpha}^n(\eta)dt} \\
&\geq  \int_0^T{\sum_{\eta, i}(\rho^n_t(\eta^{i,i+1}) - \rho^n_t(\eta))\nabla_nG\Big(t,\frac{i}{n}\Big)\eta_i(1-\eta_{i+1})\nu_{\alpha}^n(\eta)dt} \\
&\quad -\frac{1}{2n}\int_0^T{\sum_{\eta, i} \Big[\nabla_nG\Big(t,\frac{i}{n}\Big)\Big]^2\eta_i(1-\eta_{i+1})\rho_t^n(\eta)\nu_{\alpha}^n(\eta)dt} \\
&= -\int_0^T{\sum_{\eta, i}\rho_t^n(\eta)\Big[\nabla_nG\Big(t,\frac{i}{n}\Big)-\nabla_nG\Big(t,\frac{i-1}{n}\Big)\Big]\eta_i\nu_{\alpha}^n(\eta)dt} \\
&\quad -\frac{1}{2n}\int_0^T{\sum_{\eta, i} \Big[\nabla_nG\Big(t,\frac{i}{n}\Big)\Big]^2\eta_i(1-\eta_{i+1})\rho_t^n(\eta)\nu_{\alpha}^n(\eta)dt}
\end{align*}
Using the replacement lemma, passing to the supremum in $G$, and to  the limit,
\begin{align*}
&\liminf_{n\to\infty}\frac{1}{n}\int_0^T{\mathcal{E}(\rho_t^n, \log \rho_t^n)dt} \\
&\geq \underset{G}{\sup}\bigg\{ -2\int_0^T{\int_\T{G''m_t \; d\theta}dt} - \int_0^T{\int_\T{m_t(1-m_t)(G')^2d\theta}dt}\bigg\} \\
&= \underset{G}{\sup} \bigg\{-2\int_0^T{\int_\T{G''m_t(1-m_t)h''(m_t)d\theta}dt} - \int_0^T{\int_\T{m_t(1-m_t)(G')^2d\theta}dt}\bigg\}\\
&=\int_0^T{\int_\T{m_t(1-m_t)\Big(\frac{\partial h'(m_t)}{\partial \theta}\Big)^2d\theta}dt},
\end{align*}
and this is exactly what we were seeking to prove.


\begin{acknowledgement}
M.F. : I would like to thank Hong Duong, Matthias Erbar, Vaios Laschos and Andr\'e Schlichting for discussions on convergence of gradient flows. Part of this work was done while I was staying at the Hausdorff Institute for Mathematics in Bonn, whose support is gratefully acknowledged. I also benefited from funding from GDR MOMAS and from NSF FRG grant DMS-1361185.

M.S.: This work has been supported by the French Ministry of Education through the grant ANR (EDNHS), and also  by  CAPES (Brazil) and IMPA (Instituto de Matematica Pura e Aplicada, Rio de Janeiro) through a post-doctoral fellowship. 
\end{acknowledgement}

\end{document}